\newtheorem{theorem}{Theorem}
\newtheorem{proposition}[theorem]{Proposition}
\newtheorem{corollary}[theorem]{Corollary}
\newtheorem{lemma}[theorem]{Lemma}
\begin{document}

\newcommand{\T}		{\mathbb{T}}
\newcommand{\Z}		{\mathbb{Z}}
\newcommand{\C}		{\mathbb{C}}
\newcommand{\N}		{\mathbb{N}}
\newcommand{\D}		{\mathbb{D}}

\newcommand{\A}	{\mathcal{A}}
\newcommand{\Ha}	{\mathcal{H}}
\newcommand{\U}	{\mathcal{S}}
\newcommand{\Po}	{\mathcal{P}}
\newcommand{\I}		{\mathcal{I}}
\newcommand{\B}	{\mathcal{B}}

\newcommand{\w}	{\textnormal{w}}
\newcommand{\z}		{\textnormal{z}}
\newcommand{\rank}	{\textnormal{rank}}

\title[Meromorphic Extendibility and Rigidity of Interpolation]{Meromorphic Extendibility and Rigidity of Interpolation}

\author[M.~Raghupathi]{Mrinal Raghupathi} \address{Department of Mathematics, Vanderbilt University, Nashville, TN, 37240}

\email{mrinal.raghupathi@vanderbilt.edu}

\author[M.~Yattselev]{Maxim L. Yattselev}

\address{Corresponding author, Department of Mathematics, University of Oregon, Eugene, OR, 97403}

\email{maximy@uoregon.edu}

\thanks{The research was conducted during M.Y.'s stay at Vanderbilt University as a Visiting Scholar.}

\begin{abstract}
  Let $\T$ be the unit circle, $f$ be an $\alpha$-H\"older continuous
  function on $\T$, $\alpha>1/2$, and $\A$ be the algebra of
  continuous function in the closed unit disk $\overline\D$ that are
  holomorphic in $\D$. Then $f$ extends to a meromorphic function in
  $\D$ with at most $m$ poles if and only if the winding number of
  $f+h$ on $\T$ is bigger or equal to $-m$ for any $h\in\A$ such that
  $f+h\neq0$ on $\T$.
\end{abstract}

\subjclass[2000]{30E25}

\keywords{meromorphic extensions, winding number, interpolation}

\maketitle

\section{Main Results}

Let $g$ be a non-vanishing continuous function on a simple Jordan
curve $T$. Denote by $\w_T(g)$ the winding number of $g(T)$ around the
origin. That is, $2\pi\w_T(g)$ is equal to the change of the argument
of $g$ on $T$ when the curve $T$ is traversed in the positive
direction with respect to $D$, the interior domain of $T$. Denote by
$\A(D)$ the algebra of functions continuous on $\overline D$ and
holomorphic in $D$. 

Motivated by the work of Alexander and Wermer \cite{AlWer00} and Stout
\cite{Stout00}, Globevnik \cite{Glob_CADSII05} obtained the following
characterization of functions in the disk algebra $\A:=\A(\D)$, where
$\D$ is the unit disk. 
\begin{theorem}[Globevnik~\cite{Glob_CADSII05}]
A continuous function $f$ on the unit
  circle $\T$ extends holomorphically through $\D$ if and only if
  $\w_\T(f+q)\geq0$ for each polynomial $q$ such that $f+q\neq0$ on
  $\T$.
\end{theorem}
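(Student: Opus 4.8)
The plan is to prove the two implications separately, handling the forward direction by the argument principle and the converse by contraposition, where I construct an explicit polynomial producing a strictly negative winding number whenever $f$ fails to extend.

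For the forward implication, suppose $f$ extends holomorphically, so that $f\in\A$. For any polynomial $q$ we have $f+q\in\A$, and if $f+q\neq0$ on $\T$ then $f+q$ is a function in the disk algebra that does not vanish on the boundary. By the argument principle, $\w_\T(f+q)$ equals the number of zeros of $f+q$ in $\D$ counted with multiplicity, which is $\geq0$. This settles the easy direction.

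For the converse I argue by contraposition: assuming $f\notin\A$, I produce a polynomial $q$ with $f+q\neq0$ on $\T$ and $\w_\T(f+q)\leq-1$. Since $f\notin\A$ we have $\int_\T f(z)z^n\,dz\neq0$ for some $n\geq0$, so some negative Fourier coefficient of $f$ is nonzero. The difficulty is that the Riesz projection onto the anti-analytic part is unbounded on the continuous functions, so I cannot simply subtract the analytic part of $f$ outright. Instead I first replace $f$ by a smooth approximant $f_\varepsilon$ (a Fej\'er or Poisson mean), chosen so that $\|f-f_\varepsilon\|_\infty<\varepsilon$ and $f_\varepsilon$ still carries a nonzero negative frequency. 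For the trigonometric polynomial $f_\varepsilon$ the anti-analytic part is a genuine polynomial in $\bar z$: if $-M$ is its most negative frequency, then subtracting the analytic part by a polynomial $q$ gives $f_\varepsilon+q=z^{-M}p(z)$ with $\deg p\leq M-1$ and $p(0)\neq0$. Since $p$ has at most $M-1$ zeros in $\D$, a generic small polynomial perturbation of $q$ (adding a multiple of $z^{M}$) moves any boundary zeros off $\T$ and keeps the count of interior zeros at most $M-1$, so that $\w_\T(f_\varepsilon+q)=-M+\#\{\text{zeros of }p\text{ in }\D\}\leq-1$.

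It remains to transfer this to $f$ itself: if $|f_\varepsilon+q|>\varepsilon$ on $\T$, then $f+q$ is nonvanishing and, by a Rouch\'e-type stability of the winding number under uniform perturbations of nonvanishing functions, $\w_\T(f+q)=\w_\T(f_\varepsilon+q)\leq-1$, contradicting the hypothesis. The main obstacle is precisely this last step, namely guaranteeing that the exposed part $z^{-M}p(z)$ stays bounded below by the approximation error uniformly on $\T$; this is delicate because $p$ may be small near $\T$, and the two scales — the size of the negative part of $f$ and the approximation error $\varepsilon$ — must be reconciled. I expect to overcome it by a quantitative refinement of the approximation that couples the choice of $f_\varepsilon$ with the perturbation of $q$, or, alternatively, by a normal-families argument that extracts the holomorphic extension directly from a sequence of near-extremal perturbations.
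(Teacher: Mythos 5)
Your forward direction is fine. The converse, however, contains a genuine gap --- one you candidly flag yourself --- and it sits exactly at the load-bearing step. To transfer the negative winding number from $f_\varepsilon+q$ to $f+q$ you need the uniform lower bound $\min_\T|f_\varepsilon+q|>\|f-f_\varepsilon\|_\infty$, and nothing in your construction provides it. On $\T$ you have $|f_\varepsilon+q|=|p|$ (up to the perturbation by a multiple of $z^M$), and while the \emph{sup} of $|p|$ on $\T$ is bounded below by $\mathrm{dist}(f,\A)-\varepsilon$, the \emph{min} is not: the coefficients of $p$ are the negative Fourier coefficients of $f_\varepsilon$, which no choice of the analytic polynomial $q$ can alter, so you have no freedom to push $p$ away from zero on $\T$. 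Worse, as $\varepsilon\to0$ the degree $M=M(\varepsilon)$ grows and $p=p_\varepsilon$ changes with it (the Riesz projection of $f$ itself need not even be bounded, as you note), so there is no fixed target to which a quantitative refinement could converge. The two scales you hope to reconcile genuinely fight each other, and the ``generic perturbation by $cz^M$'' only guarantees nonvanishing, not a lower bound comparable to $\varepsilon$.

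For what it is worth, the paper does not prove this theorem --- it quotes it from Globevnik and points to Khavinson's short proof via badly approximable functions, and that is precisely the idea your argument is missing. If $f\notin\A$ and $h^*$ is a best approximation to $f$ from $\A$, then the error $f-h^*$ is badly approximable, and by the Poreda--Gamelin--Garnett characterization a badly approximable continuous function has \emph{constant} modulus $\mathrm{dist}(f,\A)>0$ on $\T$ and winding number $\leq-1$. This single input hands you both things your construction lacks at once: a uniform lower bound on $|f-h^*|$ over all of $\T$ (so that replacing $-h^*$ by a nearby polynomial $q$ is harmless by the stability computation in \eqref{eq:2wn}) and the strictly negative winding number. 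I would recommend abandoning the Fej\'er-mean route and rebuilding the converse around the extremal function $h^*$.
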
 
A shorter proof, based on the notion of badly-approximable
functions, was obtained by Khavinson~\cite{Khav_CADSII05}.

The polynomials are a dense subalgebra of $\A$. Thus, for any $h\in\A$
such that $f+h\neq0$ on $\T$, there exists a polynomial $q$ satisfying
$|h-q|<|f+h|$ on $\T$. Then
\begin{equation}
\label{eq:2wn}
\w_\T(f+q) = \w_\T(f+h+q-h) = \w_\T(f+h)+\w_\T\left(1+\frac{q-h}{f+h}\right) = \w_\T(f+h).
\end{equation}
Hence, the hypothesis of Globevnik's result above could equivalently
have been stated as $w_\T(f+q)\geq 0$ for all $q\in \A$.

In later work Globevnik~\cite{Glob04} was able to generalize the above
result to multiply-connected domains. Namely, let $D$ be a domain
whose boundary $T$ consists of finitely many pairwise disjoint Jordan
curves. The winding number of a continuous, non-vanishing function $g$
on $T$, $\w_T(g)$, is defined as the sum of the individual winding
numbers on each Jordan curve constituting $T$ oriented positively with
respect to $D$.  Then \emph{a continuous function $f$ on $T$ is the
  trace of a function in $\A(D)$ if and only if $\w_T(f+h)\geq0$ for
  any $h\in\A(D)$ such that $f+h\neq0$ on $T$.}

The next natural question is to characterize functions that admit
\emph{meromorphic} continuation. On this path Globevnik \cite{Glob08b}
showed the following: \emph{A continuous function $f$ on $\T$ extends
  meromorphically through $\D$ with at most $m\in\Z_+:=\N\cup\{0\}$
  poles\footnote{The counting of poles and zeros is done including
    multiplicities.} in $\D$ if and only if $\w_\T(pf+q)\geq-m$ for
  each pair of polynomials $p$ and $q$ such that $pf+q\neq0$ on $\T$.}
Moreover, for sufficiently smooth $f$ it could be assumed that
$p\in\Po_m$, i.e., $p$ is a polynomial of degree at most $m$. In fact,
this assumption can be made regardless of the smoothness of
$f$. Indeed, it follows from the same method used in
\cite{Khav_CADSII05} for the case $m=0$. One merely replaces the best
holomorphic approximant by the best meromorphic approximant with at
most $m$ poles. It is known from AAK-theory \cite{AAK71} that the error
of such approximation has constant modulus on $\T$ and winding number
at most $-(2m+1)$. Finally, in \cite{Glob08}, Globevnik derived a
similar result on meromorphic extendibility for multiply connected
domains where $p$ and $q$ belong to $\A(D)$.

The work on holomorphic extendibility in \cite{Glob08b} and
\cite{Glob08} indicates that the above results could be improved. In
particular, is it the case that the conclusion of the above results on
meromorphic extensibility are still true, under the weaker hypothesis
that $\w_T(f+q)\geq -m$ for all $q\in \A(D)$. In this work we give an
affirmative answer to this question in the case of the unit circle
when $f$ is sufficiently smooth. We now state our main result.

\begin{theorem}
\label{thm:merext}
Let $f$ be an $\alpha$-H\"older continuous function on $\T$,
$\alpha>1/2$. Let $m\in\Z_+$. Then $f$ extends to a meromorphic
function with at most $m$ poles in $\D$ if and only if
\begin{equation}
\label{eq:merext}
\w_\T(f+h) \geq -m
\end{equation}
for every $h\in\A$ such that $f+h\neq0$ on $\T$.
\end{theorem}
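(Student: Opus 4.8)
My plan is to treat the two implications separately: the necessity by a direct application of the argument principle, and the sufficiency by contraposition through the Adamyan--Arov--Krein (AAK) theory already invoked in the introduction.

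For necessity, suppose $f$ is the trace of a function $F$ meromorphic in $\D$ with at most $m$ poles. Fix $h\in\A$ with $f+h\neq0$ on $\T$. Then $F+h$ is meromorphic in $\D$, continuous up to $\T$ away from its (at most $m$) poles, and non-vanishing on $\T$, so the argument principle gives $\w_\T(f+h)=Z-P$, where $Z\geq0$ and $P\leq m$ count the zeros and poles of $F+h$ in $\D$. Hence $\w_\T(f+h)\geq-m$, which is \eqref{eq:merext}. This direction uses neither the H\"older regularity nor anything beyond the $m=0$ ideas.

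For sufficiency I would argue the contrapositive: assuming $f$ does \emph{not} extend meromorphically with at most $m$ poles, I would produce a single $h\in\A$ with $f+h\neq0$ and $\w_\T(f+h)\leq-(m+1)$, contradicting \eqref{eq:merext}. By Kronecker's theorem non-extendibility is equivalent to $\rank H_f\geq m+1$, i.e. to $s_m>0$ for the singular values of the Hankel operator $H_f$; here the hypothesis $\alpha>1/2$ guarantees that $H_f$ is well behaved (compact, with H\"older-continuous Schmidt data), so AAK theory supplies a best meromorphic approximant $r$ with at most $\nu\leq m$ poles $a_1,\dots,a_\nu$ in $\D$ whose error $u:=f-r$ has constant modulus $s_m>0$ on $\T$ and $\w_\T(u)\leq-(2m+1)$. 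Writing $B$ for the Blaschke product with zeros $a_1,\dots,a_\nu$, the product $Br$ extends to an element of $\A$, and $h':=-Br\in\A$ satisfies $Bf+h'=Bu$, whence $\w_\T(Bf+h')=\nu+\w_\T(u)\leq m-(2m+1)=-(m+1)$.

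The heart of the argument---and the step I expect to be the main obstacle---is then to pass from this winding estimate for $Bf$ to one for $f$ while remaining inside $\A$: the natural candidate $-Br\cdot\overline{B}$ is not analytic, so the Blaschke factor cannot simply be cancelled. I would handle this through the reformulation $\w_\T(g)=-\mathrm{ind}\,T_g$ for non-vanishing $g$, which turns the target inequality $\w_\T(f+h)\leq-(m+1)$ into the requirement $\dim\ker T_{f+h}\geq m+1$. Using the factorization $T_{Bu}=T_uT_B$ (valid since $B$ is analytic) one identifies $\ker T_{Bf+h'}$ with $\{g:Bg\in\ker T_u\}$, a space of dimension at least $m+1$, and the plan is to transport these kernel elements back across multiplication by $B$, matching them to a single multiplier $h\in\A$ through the relation $g\in\ker T_{f+h}\iff hg=-P_+(fg)$. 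Forcing the $m+1$ required ratios $-P_+(fg_i)/g_i$ to coincide as one element of $\A$ is precisely where the rigidity of the AAK extremal problem and the hypothesis $\alpha>1/2$ must enter; this smoothness is what drives the poles $a_i$ strictly into $\D$ and makes the associated interpolation data unambiguous, and it is genuinely needed only for $m\geq1$. Once such an $h$ is in hand, $\w_\T(f+h)\leq-(m+1)<-m$ contradicts \eqref{eq:merext}, completing the sufficiency; equivalently, the construction exhibits $\inf_{h\in\A}\w_\T(f+h)=-\rank H_f$, so \eqref{eq:merext} holds for all $h$ if and only if $\rank H_f\leq m$, which is exactly meromorphic extendibility with at most $m$ poles.
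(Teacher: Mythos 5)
Your necessity argument is fine and matches the paper's (which phrases it as $\w_\T(f+h)=\w_\T(g+qh)-\w_\T(q)\geq-m$ for $f=g/q$, $g\in\A$, $q\in\Po_m$). The problem is the sufficiency direction, where your proposal stops exactly at the point where the real work begins.

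What your AAK construction actually produces is an $h'\in\A$ and a Blaschke product $B$ of degree $\nu\leq m$ with $\w_\T(Bf+h')\leq-(m+1)$. That is a statement of the form $\w_\T(pf+q)\leq-(m+1)$ with $p$ analytic of degree at most $m$, i.e.\ it re-derives (the contrapositive of) Globevnik's earlier theorem on meromorphic extendibility with the multiplicative perturbation $p$. The entire content of Theorem~\ref{thm:merext} is that the multiplier can be dispensed with, and your plan for removing $B$ is not an argument: you propose to find a single $h\in\A$ with $\dim\ker T_{f+h}\geq m+1$ by ``matching'' the $m+1$ kernel elements $g_1,\dots,g_{m+1}$ of $T_{Bu}$ to one multiplier via $h=-P_+(fg_i)/g_i$, but these are $m+1$ independent constraints on a single function $h$, and nothing in AAK theory forces the ratios $-P_+(fg_i)/g_i$ to coincide (generically they will not). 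You acknowledge this is ``the main obstacle'' and defer it to ``the rigidity of the AAK extremal problem,'' but no such rigidity statement is formulated or proved, so the contradiction $\w_\T(f+h)\leq-(m+1)$ is never actually exhibited. Note also that the hypothesis $\alpha>1/2$ does not enter where you place it (compactness of $H_f$ and poles strictly inside $\D$ hold for any continuous $f$); in the paper it is used for a completely different purpose, namely to guarantee via Salem--Zygmund that $(f_-+q)(\T)$ has empty interior so that Rouch\'e perturbations are available in Proposition~\ref{prop:interpol}.

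The paper's route is entirely different and is what fills this gap: Proposition~\ref{prop:interpol} converts the winding-number hypothesis into the interpolation rigidity $\z(f_n+p)\leq m+n$ for all $p\in\Po_n$, and Theorem~\ref{thm:overint} then shows that a holomorphic $g$ with $z^ng\in\I_{n,m}$ for all $n$ must be rational of type $(m,m)$. That theorem is proved with multivalent-function machinery (the classes $\U_k$ and $\B_k$, the $1/4^m$-covering theorem, the transformation \eqref{eq:transf}, subharmonicity of $a\mapsto|(g^a)_k|$, and a nested extremal-family argument culminating in Lemma~\ref{lem:bm}). None of this is replaceable by the Toeplitz-index bookkeeping you outline, so as it stands the proposal proves only the already-known $pf+q$ version, not the theorem.
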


As indicated in the argument following~\eqref{eq:2wn}, this theorem can be
 stated equivalently with $h$ in the algebra of analytic polynomials.

Notice that the necessity of \eqref{eq:merext} is trivial. Indeed, if
$f=g/q$, $g\in\A$ and $q\in\Po_m$, then
$\w_\T(f+h)=\w_\T(g+qh)-\w_\T(q)\geq-m$ as the winding number of
$g+qh$ is non-negative and $\w_\T(q)$ is equal to the number of zeros
of $q$ in $\D$. Thus, we need only to show that \eqref{eq:merext} is
sufficient for $f$ to be the trace of a function meromorphic in $\D$
with at most $m$ poles there.

Let $\varphi\in\A$ be bi-Lipschitz in $\overline\D$. That is, there
exists a finite positive constant $c$ such that
\[
(1/c)|z_1-z_2|\leq|\varphi(z_1)-\varphi(z_2)| \leq c|z_1-z_2|, \quad
z_1,z_2\in\overline\D.
\]
Put $D_\varphi:=\varphi(\D)$ and $L_\varphi:=\varphi(\T)$. Clearly, it
holds that $h\in\A(D_\varphi)$ if and only if
$h\circ\varphi\in\A$. Moreover, it is true that
$\w_{L_\varphi}(g)=\w_\T(g\circ\varphi)$ for any continuous function
$g$ on $L_\varphi$. It is also true that $\varphi$ preserves H\"older
classes. Thus, the following result is another immediate consequence
of Theorem~\ref{thm:merext}.

\begin{corollary}
  Let $\varphi\in\A$ be bi-Lipschitz in $\overline\D$. Then
  Theorem~\ref{thm:merext} remains valid when $\T$ and $\D$ are
  replaced by $L_\varphi$ and $D_\varphi$, respectively.
\end{corollary}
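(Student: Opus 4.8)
The plan is to prove the corollary by transferring every object through $\varphi$ and then applying Theorem~\ref{thm:merext} on the disk. First I would record the geometric setup: the lower bi-Lipschitz estimate forces $\varphi$ to be injective on $\overline\D$, so $\varphi$ is a univalent function in $\A$. A univalent holomorphic function has nonvanishing derivative in $\D$, whence $\varphi$ is a conformal bijection of $\D$ onto $D_\varphi$ with holomorphic inverse $\varphi^{-1}$; being bi-Lipschitz on the compact set $\overline\D$, it also extends to a homeomorphism of $\overline\D$ onto $\overline{D_\varphi}$ that carries $\T$ onto $L_\varphi$.

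Next, given the function $F$ that is $\alpha$-H\"older on $L_\varphi$ with $\alpha>1/2$, I would set $f:=F\circ\varphi$. Since $\varphi$ preserves H\"older classes, $f$ is $\alpha$-H\"older on $\T$ with the same exponent, so Theorem~\ref{thm:merext} applies to $f$. The assignment $H\mapsto h:=H\circ\varphi$ is, by the stated equivalence $h\in\A(D_\varphi)\Leftrightarrow h\circ\varphi\in\A$, a bijection of $\A(D_\varphi)$ onto $\A$; moreover $F+H\neq0$ on $L_\varphi$ if and only if $f+h\neq0$ on $\T$, and $\w_{L_\varphi}(F+H)=\w_\T(f+h)$ by the identity $\w_{L_\varphi}(g)=\w_\T(g\circ\varphi)$. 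Consequently the hypothesis $\w_{L_\varphi}(F+H)\geq-m$ for all admissible $H$ is literally identical to the hypothesis $\w_\T(f+h)\geq-m$ for all admissible $h$ appearing in Theorem~\ref{thm:merext}.

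It then remains to transfer the conclusion, i.e.\ to verify that $F$ extends meromorphically to $D_\varphi$ with at most $m$ poles exactly when $f$ does so in $\D$; this is the only step with genuine content, though I expect it to be routine rather than a real obstacle. If $\tilde f$ is the meromorphic extension of $f$ with at most $m$ poles, then $\tilde f\circ\varphi^{-1}$ is meromorphic in $D_\varphi$, and because $\varphi^{-1}$ is biholomorphic a pole of $\tilde f$ of order $k$ at $z_0$ is carried to a pole of the same order $k$ at $\varphi(z_0)$ (locally $(\varphi(z)-\varphi(z_0))^k\sim(\varphi'(z_0))^k(z-z_0)^k$ with $\varphi'(z_0)\neq0$ does not alter the order), so the total pole count with multiplicity is preserved; the reverse direction is identical with the roles of $\varphi$ and $\varphi^{-1}$ interchanged. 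Finally, since $\varphi$ is a boundary homeomorphism, the nontangential trace of $\tilde f\circ\varphi^{-1}$ on $L_\varphi$ equals $F$, so $\tilde f\circ\varphi^{-1}$ is a bona fide meromorphic extension of $F$ with at most $m$ poles. Combining these three observations with Theorem~\ref{thm:merext} yields the equivalence asserted in the corollary.
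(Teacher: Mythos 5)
Your proposal is correct and follows essentially the same route as the paper, which simply invokes the three transfer facts ($h\in\A(D_\varphi)\Leftrightarrow h\circ\varphi\in\A$, $\w_{L_\varphi}(g)=\w_\T(g\circ\varphi)$, and preservation of H\"older classes) and declares the corollary an immediate consequence of Theorem~\ref{thm:merext}. You have merely filled in the details the paper leaves implicit (injectivity of $\varphi$ from the lower Lipschitz bound, conformality, and invariance of the pole count under the biholomorphism), all of which are sound.
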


In what follows, we suppose that $m\in\N$ since the case $m=0$ was
shown in \cite{Glob04}. Moreover, without loss of generality we may
assume that $f\notin\A$. This, in particular, implies that
$f_-\not\equiv0$, where
\begin{equation}
\label{eq:anti}
f_-(z) := \frac{1}{2\pi i}\int_\T\frac{f(t)dt}{z-t}, \quad z\notin\overline\D,
\end{equation}
is the anti-analytic part of $f$. It is known that each $f_-$ is a
function holomorphic outside of $\overline\D$ whose trace on $\T$ is
also $\alpha$-H\"older continuous.

Our approach lies in recasting the condition on the winding number of
$f+h$, $h\in\A$, as a certain rigidity property of interpolation of
$f$ by polynomials. For a function $g$ holomorphic in $\D$ we denote
by $\z(g)$ the number of zeros of $g$ in $\D$ counting
multiplicities. The following proposition is central to our approach.

\begin{proposition}
\label{prop:interpol}
Let $f$ be an $\alpha$-H\"older continuous function on $\T$,
$\alpha>1/2$. Let $m\in\N$. Then \eqref{eq:merext} is satisfied if and
only if
\begin{equation}
\label{eq:zeros}
\z(f_n+p)\leq m+n
\end{equation}
holds for for any $n\in\Z_+$ and any polynomial $p\in\Po_n$, where
$f_n(z) = z^nf_-(1/z)$, $z\in\D$.
\end{proposition}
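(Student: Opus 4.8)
The plan is to show that \eqref{eq:merext} and \eqref{eq:zeros} are each equivalent to the single intermediate condition
\[
\w_\T(f_-+q)\geq -m \quad\text{for every polynomial } q \text{ with } f_-+q\neq 0 \text{ on } \T,
\]
where I identify $f_-$ with its boundary trace, continuous by hypothesis. Granting this, the proposition follows at once.

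For the equivalence with \eqref{eq:merext} I would argue exactly as in the passage around \eqref{eq:2wn}. The analytic part $f_+:=f-f_-$ is $\alpha$-H\"older and has vanishing negative Fourier coefficients, so $f_+\in\A$; writing $f+h=f_-+(f_++h)$ and noting that $f_++h$ ranges over all of $\A$ as $h$ does, condition \eqref{eq:merext} is equivalent to $\w_\T(f_-+g)\geq -m$ for every $g\in\A$ with $f_-+g\neq 0$ on $\T$. Given such a $g$, I would pick a polynomial $q$ with $|g-q|<|f_-+g|$ on $\T$; then $\w_\T(f_-+q)=\w_\T(f_-+g)$ and $f_-+q\neq0$ on $\T$, which shows that the intermediate condition already forces \eqref{eq:merext}, the reverse implication being the trivial special case $g=q$.

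The substance of the argument is the equivalence with \eqref{eq:zeros}, which I expect to rest on a reflection identity. For $p\in\Po_n$ put $\tilde p(z):=z^np(1/z)\in\Po_n$; this coefficient-reversal is a bijection of $\Po_n$, so as $(n,p)$ ranges over $\Z_+\times\Po_n$ the polynomial $\tilde p$ ranges over all polynomials. Using $f_n(w)=w^nf_-(1/w)$, I would compute on $\T$ that
\[
f_n(w)+p(w)=w^n\big(f_-(1/w)+(1/w)^np(w)\big)=w^n(f_-+\tilde p)(1/w),
\]
so $f_n+p$ and $f_-+\tilde p$ vanish at corresponding points of $\T$ under $w\mapsto 1/w$, and, when they do not vanish, the factor $w^n$ together with the orientation reversal of $w\mapsto 1/w$ gives $\w_\T(f_n+p)=n-\w_\T(f_-+\tilde p)$. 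Since the continuity of the trace of $f_-$ makes $f_n\in\A$, and since $f_n$ vanishes to order $>n\geq\deg p$ at the origin while $f_-\not\equiv0$ forces $f_n+p\not\equiv0$, the argument principle yields $\z(f_n+p)=\w_\T(f_n+p)$ whenever $f_n+p\neq0$ on $\T$. Thus $\z(f_n+p)\leq m+n$ is equivalent to $\w_\T(f_-+\tilde p)\geq -m$, and letting $\tilde p$ run over all polynomials identifies \eqref{eq:zeros}, restricted to boundary-nonvanishing $p$, with the intermediate condition.

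The one delicate point, which I expect to be the main obstacle rather than the identity itself, is to drop the restriction $f_n+p\neq0$ on $\T$ in \eqref{eq:zeros}. Here I would use a Hurwitz perturbation: for a given $p\in\Po_n$ the constants $c$ for which $f_n+p-c$ vanishes somewhere on $\T$ form the measure-zero set $(f_n+p)(\T)$, so I can choose $c_k\to0$ with $f_n+p-c_k\neq0$ on $\T$ and $p-c_k\in\Po_n$. Fixing $r<1$ with all zeros of $f_n+p$ in $\{|z|<r\}$ and $f_n+p\neq0$ on $\{|z|=r\}$, Hurwitz's theorem makes the number of zeros of $f_n+p-c_k$ in $\{|z|<r\}$ equal to $\z(f_n+p)$ for large $k$; as its total zero count in $\D$ is at most $m+n$ by the boundary-nonvanishing case, I conclude $\z(f_n+p)\leq m+n$ in general.
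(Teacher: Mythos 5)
Your overall route is essentially the paper's: both proofs reduce \eqref{eq:merext} and \eqref{eq:zeros} to the same intermediate condition on $f_-+q$ for polynomials $q$ (you phrase it as $\w_\T(f_-+q)\geq-m$, the paper as $\z_{\{|z|>1\}}(f_-+q)\leq\deg(q)+m$, and these are identified by the exterior argument principle \eqref{eq:zerosandwinding}), both use the same density argument to pass from $h\in\A$ to polynomials, the same reflection $p(z)=z^nq(1/z)$, and the same small-perturbation idea to remove the restriction that $f_n+p$ not vanish on $\T$. Your variant of applying the argument principle inside $\D$ to $f_n+p$ rather than counting zeros of $f_-+q$ in the exterior is a harmless cosmetic difference, and in fact spares you the paper's bookkeeping of the order of vanishing of $p$ at the origin.

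The one genuine gap is the assertion that $(f_n+p)(\T)$ has planar measure zero. For a general continuous function on $\T$ this is simply false: the image can be a Peano-type curve of positive area, and can even cover a full neighborhood of $0$, in which case no small constant perturbation makes $f_n+p$ nonvanishing on $\T$ and your Hurwitz step collapses. This is precisely the point where the hypothesis $\alpha>1/2$ must be invoked --- the paper states explicitly that the H\"older condition is needed ``exactly for the proof of this proposition'' and cites Salem--Zygmund \cite{SalZyg45} for the fact that $(f_-+q)(\T)$ has empty interior. You need to supply a justification here, either by that citation or by the elementary observation that an $\alpha$-H\"older map sends a set of Hausdorff dimension $1$ into a set of dimension at most $1/\alpha<2$, hence of two-dimensional Lebesgue measure zero. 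With that one sentence added, your argument is complete and matches the paper's.
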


The condition on H\"older continuity of $f$ appearing in the statement
of Theorem~\ref{thm:merext} is needed exactly for the proof of this
proposition. It ensures the fact that the image of $\T$ under $f_n+p$
has no interior points.

Motivated by the interpolation rigidity property \eqref{eq:zeros}, we
define the following classes
\[
\I_{n,m} := \left\{g:~g~\mbox{is holomorphic in}~\D,~\z(g+p)\leq
  m+n~\mbox{for any}~p\in\Po_n\right\},
\]
$n\in\Z_+$, $m\in\N$. That is, a function $g$ holomorphic in $\D$
belongs to the class $\I_{n,m}$ if and only if any polynomial of
degree at most $n$ interpolates $g$ at no more than $n+m$ points. It
follows immediately from the definition that
$\I_{n,m}\subset\I_{n,m+1}$. Moreover, $g\in\I_{0,1}$ if and only if
$g$ is a univalent function in $\D$ and
$g\in\I_{0,m+1}\setminus\I_{0,m}$ if and only if $g$ is an
$(m+1)$-valent function.

Observe that a function $f$, continuous on $\T$, is the trace of a
function meromorphic in $\D$ if and only if $f_-$ is a rational
function. Thus, the sufficiency part of Theorem~\ref{thm:merext} is a
consequence of Proposition~\ref{prop:interpol} and the following
theorem applied with $g(\cdot)=f_-(1/\cdot)$.

\begin{theorem}
\label{thm:overint}
Let $g$ be a holomorphic function in $\D$ such that $g_n\in\I_{n,m}$
for any $n\in\Z_+$, $g_n(z)=z^ng(z)$. Then $g$ is a rational function
of type $(m,m)$ holomorphic in $\D$.
\end{theorem}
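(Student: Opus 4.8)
The plan is to control, for each $n\in\Z_+$, the extremal interior zero count
\[
N(n):=\sup_{p\in\Po_n}\z(z^ng+p),
\]
and to read the rational structure of $g$ off the growth of $N(n)$. First I would record two elementary facts. Since $z^ng$ vanishes to order at least $n$ at the origin, $N(n)\ge n$, while the hypothesis $g_n\in\I_{n,m}$ is exactly the statement $N(n)\le m+n$. Next, if $p_n\in\Po_n$ is a nearly extremal choice and $z^ng+p_n\not\equiv0$, then $zp_n\in\Po_{n+1}$ and $z^{n+1}g+zp_n=z(z^ng+p_n)$ has one more zero in $\D$ than $z^ng+p_n$, so $N(n+1)\ge N(n)+1$. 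Consequently the excess $e(n):=N(n)-n$ is a nondecreasing integer sequence confined to $[0,m]$, hence stabilizes to some $\ell\le m$ for $n\ge n_0$. (The degenerate case $z^ng+p_n\equiv0$ already makes $g$ rational and is handled separately.)

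The second step is to recast ``$z^ng+p$ has at least $k$ prescribed zeros'' as linear algebra. Requiring $z^ng+p$ to vanish, with multiplicity, at nodes $w_1,\dots,w_k\in\D$ means that $p\in\Po_n$ interpolates $-z^ng$ there; by the confluent Vandermonde/Newton formalism this is solvable if and only if the divided differences of $z^ng$ of orders $n+1,\dots,k-1$ on these nodes vanish. Thus the stabilized value $\ell$ says that for large $n$ one can match $z^ng$ by a polynomial of degree $n$ at $n+\ell$ interior nodes, but never at $n+\ell+1$. Writing $g(z)=\sum_k a_kz^k$, all these divided differences are bilinear expressions in the Hankel array $\Gamma=(a_{i+j})_{i,j\ge0}$, so the interpolation rigidity is precisely a statement about the ranks of finite sections of $\Gamma$.

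The heart of the argument, and the step I expect to be hardest, is to upgrade the stabilized excess $\ell\le m$ to the global algebraic statement $\rank\Gamma\le m$. I would argue by contraposition: if some $(m+1)\times(m+1)$ Hankel minor were nonzero, I would exploit the freedom in both $p\in\Po_n$ (which contributes $n+1$ parameters) and the nodes $w_j\in\D$ to solve the over-determined system $p(w_j)=-w_j^ng(w_j)$ at $m+n+1$ points, producing a $p$ with $\z(z^ng+p)\ge m+n+1$ and contradicting the hypothesis. The delicate point is not the parameter count but keeping every produced zero inside $\D$: the zero count is sensitive to the location of the nodes, so the solvability/implicit-function argument must be run within the open disk. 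The outcome is a nontrivial relation $\sum_{j=0}^m q_ja_{k-j}=0$ valid for all large $k$.

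Finally, such a recurrence is Kronecker's criterion: $\rank\Gamma\le m$ forces $g$ to be rational with denominator of degree at most $m$, and since $g$ is holomorphic in $\D$ its poles lie in $\C\setminus\D$. It remains to bound the total degree. Were the McMillan degree of $g$ larger than $m$, the node-and-parameter construction of the previous step, now applied with $g=P/Q$ by matching $z^nP$ modulo $Q$ with a polynomial having $m+n+1$ roots in $\D$, would again yield a $p$ with $\z(z^ng+p)\ge m+n+1$, contradicting the hypothesis; this is the same rigidity computation that underlies the (easy) necessity direction, where $z^ng+p=(z^nP+pQ)/Q$ has numerator of degree at most $n+m$. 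Hence $g$ is a rational function of type $(m,m)$, holomorphic in $\D$, as claimed.
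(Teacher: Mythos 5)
Your reduction to the Hankel matrix is a reasonable way to organize the problem, and your first step (monotonicity of the excess $e(n)=N(n)-n$, hence stabilization at some $\ell\le m$) is correct and roughly parallels the paper's choice of the minimal $n_0$ with $\z(g_{n_0}+q)=m+n_0$. But the argument has a genuine gap exactly where you flag the ``delicate point,'' and that point is not a technicality --- it is the entire content of the theorem. Your contrapositive step asks: if some $(m+1)\times(m+1)$ minor of $\Gamma=(a_{i+j})$ is nonzero, produce $n$, $p\in\Po_n$, and $n+m+1$ nodes \emph{in $\D$} at which $p$ interpolates $-z^ng$. Parameter counting is indeed not the obstacle; the obstacle is that the $m$ divided-difference conditions cut out a variety of node configurations in $\C^{n+m+1}$ that has no a priori reason to meet $\D^{n+m+1}$. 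The one configuration you control, the confluent one at the origin, turns the conditions into the vanishing of the Taylor coefficients $a_1,\dots,a_m$, which does not hold in general, so there is no base point from which an implicit-function argument can start. The hypothesis is an inequality valid for all configurations inside the disk, and extracting from it the algebraic recurrence $\sum_{j=0}^m q_j a_{k-j}=0$ is precisely what requires a new idea; asserting that one ``would exploit the freedom'' to solve the system restates the theorem rather than proving it. The same criticism applies to your final degree bound: that rational $g$ of McMillan degree $>m$ violates the hypothesis is again asserted by appeal to ``the same rigidity computation,'' but the necessity computation only gives the upper bound $\z(g_n+p)\le n+\deg$ and does not produce a $p$ forcing all $n+m+1$ zeros into $\D$.

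The paper bridges this gap by an entirely different mechanism: it renormalizes $g_{n_0}+q$ into a function $y\in\U_k$ ($k=m+n_0$) via the transformation \eqref{eq:transf}, observes that the class of functions satisfying the interpolation bounds is invariant under $y\mapsto y^a$ for all $a\in D_k$, and then identifies the largest invariant subset of $\U_k$ as $\B_k$ (Lemma~\ref{lem:bm}) through an extremal argument: compactness of the invariant family, subharmonicity of the coefficient functionals $a\mapsto(y^a)_j$ (Lemma~\ref{lem:subh}), the maximum principle, and the characterization of $\B_m$ by constancy of $|(y^a)_j|$ (Lemma~\ref{lem:d}). The degree bound $l\le m$ is then obtained by a separate winding-number construction with $h=(e^u-s)/r$ on a slightly larger disk. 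If you want to salvage your Hankel/Kronecker framing, you would still need a substitute for this variational step; as written, the proposal does not contain one.
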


Clearly if $g$ is a rational function of type $(m,m)$, then the
numerator of $g_n+p$, $p\in\Po_n$, belongs to $\Po_{n+m}$ and
therefore $\z(g_n+p)\leq n+m$ for any $n\in\Z_+$.

Theorem~\ref{thm:overint} was initially proved for the case $m=1$
independently by Ruscheweyh \cite{Rusch76} and Kirjackis
\cite{Kir76}. In this work, we elaborate on the approach devised in
\cite{Kir76}.

\section{Proofs}

\begin{proof}[Proof of Proposition~\ref{prop:interpol}]
  We prove this proposition in two simple steps. First, we show that
  \eqref{eq:zeros} is equivalent to
\begin{equation}
\label{eq:windzer}
\z_{\{|z|>1\}}(f_-+q) \leq \deg(q)+m
\end{equation}
for any algebraic polynomial $q$, where $\z_{\{|z|>1\}}(f_-+q)$ stands
for the number of zeros of the function $f_-+q$ in $\{|z|>1\}$.
Second, we establish the equivalence of \eqref{eq:windzer} and
\eqref{eq:merext}.

Let $q\in\Po_n\setminus\Po_{n-1}$ be given. Set
$p(z):=z^nq(1/z)$. Then $p\in\Po_n$ and $p(0)\neq0$. Since
\begin{equation}
\label{eq:aux}
z^n(f_-+q)(1/z) = z^nf_-(1/z)+z^nq(1/z) = (f_n+p)(z),
\end{equation}
it holds that $\z_{\{|z|>1\}}(f_-+q) = \z(f_n+p)$. That is,
\eqref{eq:zeros} implies \eqref{eq:windzer}. Conversely, let
$p\in\Po_n$ be given. Denote by $k$ the multiplicity of the zero of
$p$ at the origin, $k=0$ if $p(0)\neq0$. Set $q(z):=z^np(1/z)$. Then
$q\in\Po_{n-k}$ and $\z_{\{|z|>1\}}(f_-+q) = \z(f_n+p)-k$ by
\eqref{eq:aux}. Hence, \eqref{eq:windzer} implies
\eqref{eq:zeros}. Thus, \eqref{eq:windzer} and \eqref{eq:zeros} are
equivalent.

Let $h\in\A$ be such that $f+h\neq0$ on $\T$. Since $f_++h\in\A$,
$f_+:=f-f_-$, there exists a polynomial $q$ such that
$|q-f_+-h|<|f+h|$ on $\T$. Then we get as in \eqref{eq:2wn} that
$\w_\T(f+h)=\w_\T(f_-+q)$. As $f_-+q$ is a meromorphic function in
$\{|z|>1\}$, $\w_\T(f_-+q)$ is simply the difference between the
number of poles and the number of zeros of $f_-+q$ there. That is,
\begin{equation}
\label{eq:zerosandwinding}
\w_\T(f_-+q) = \deg(q) - \z_{\{|z|>1\}}(f_-+q)
\end{equation}
since the point at infinity is the only possible pole of $f_-+q$ and
the order of this pole is $\deg(q)$. Thus, we get that $\w_\T(f+h) =
\deg(q) - \z_{\{|z|>1\}}(f_-+q)$. That is, \eqref{eq:windzer} implies
\eqref{eq:merext}.

Let $q$ be a polynomial and assume that $f_-+q\neq0$ on $\T$. Applying
\eqref{eq:merext} with $h=q-f_+$, we get that
$\w_\T(f_-+q)\geq-m$. The desired conclusion then follows from
\eqref{eq:zerosandwinding}. Assume now that $f_-+q=0$ for some points
on $\T$. As $f_-$ is $\alpha$-H\"older continuous function on $\T$
with $\alpha>1/2$, $(f_-+q)(\T)$ has no interior points
\cite{SalZyg45}. Thus, for any $\delta>0$ there exists a complex
number $\epsilon$ such that $|\epsilon|=\delta$ and
$f_-+q+\epsilon\neq0$ on $\T$. Then \eqref{eq:windzer} follows from
the Rouch\'e's theorem. This finishes the proof of the proposition.
\end{proof}

Let $g$ be an $m$-valent function in $\D$. That is, there exists a
constant $a$ such that $a$ interpolates $g$ at $m$ points in $\D$, say
$ \{z_1,\ldots,z_m\}$, in the Hermite sense. Put
\begin{equation}
\label{eq:transf}
g^a(z) := \frac{cz^m(g(z)-a)}{(z-z_1)\cdots(z-z_m)},
\end{equation}
where the constant $c$ is chosen so $z^{-m}g^a(z)\to1$ as
$z\to0$. Clearly, $g^a$ is again a holomorphic function in $\D$.

Denote by $\U_m$ the set of $m$-valent holomorphic functions in $\D$
satisfying $z^{-m}g(z)\to1$ as $z\to0$, and set
\[
\B_m := \left\{g\in\U_m:~g(z)=\frac{z^m}{d(z)},~d\in\Po_m\right\}.
\]
Necessarily, it holds that $d(0)=1$ and $d(z)\neq0$ in $\D$. Recall
\cite[Thm. 5.3]{Hayman} that for any $a\in D_m:=\{|z|<1/4^m\}$, the
equation $g(z)=a$ has exactly $m$ roots in $\D$ for any $g\in\U_m$.

For functions in $\U_m$ we adopt the notation
\[
g(z)=z^m+(g)_1z^{m+1}+\cdots+(g)_kz^{m+k}+\cdots.
\]

\begin{lemma}
\label{lem:ell}
There exists a set of polynomials of $m$ variables, say
$\{\ell_{m,k}\}$, $k\in\N$, $k>m$, such that $g\in\B_m$ if and only if
$(g)_k=\ell_{m,k}((g)_1,\ldots,(g)_m)$.
\end{lemma}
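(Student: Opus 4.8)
The plan is to reduce the characterization of $\B_m$ to the algebraic structure of the Taylor coefficients of the reciprocal of a polynomial of degree at most $m$. I would begin by writing $g\in\B_m$ in the normalized form $g=z^m/d$ with $d(z)=1+d_1z+\cdots+d_mz^m$ (using the observation already recorded that $d(0)=1$ and $d\neq0$ in $\D$), and expanding the reciprocal as $1/d(z)=\sum_{j\geq0}c_jz^j$ with $c_0=1$. Since $g(z)=z^m/d(z)=z^m\sum_{j\geq0}c_jz^j$, the normalized coefficients of $g$ are exactly $(g)_k=c_k$, $k\in\N$.

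Next I would extract the recurrence satisfied by the $c_k$. Comparing coefficients in the identity $d(z)\cdot(1/d(z))=1$ gives $c_k+\sum_{i=1}^{\min(k,m)}d_ic_{k-i}=0$ for every $k\geq1$. The first $m$ of these relations form a triangular system that determines $c_1,\ldots,c_m$ from $d_1,\ldots,d_m$ and is, conversely, uniquely and polynomially solvable for $d_1,\ldots,d_m$ in terms of $c_1,\ldots,c_m$ (one reads off $d_1=-c_1$, then $d_2$, and so on, each $d_i$ being a polynomial in $c_1,\ldots,c_i$). For $k>m$ the relation reads $c_k=-\sum_{i=1}^m d_ic_{k-i}$, a linear recurrence of order $m$ whose coefficients do not depend on $k$. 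Substituting the polynomial expressions for $d_1,\ldots,d_m$ and iterating this recurrence, I would define, by induction on $k$, polynomials $\ell_{m,k}$ in the $m$ variables $c_1,\ldots,c_m$ with $c_k=\ell_{m,k}(c_1,\ldots,c_m)$. This proves necessity: for $g\in\B_m$ one has $(g)_k=\ell_{m,k}((g)_1,\ldots,(g)_m)$ for all $k>m$.

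For sufficiency I would take $g\in\U_m$ whose coefficients satisfy $(g)_k=\ell_{m,k}((g)_1,\ldots,(g)_m)$ for all $k>m$, use the triangular solve above to manufacture $d_1,\ldots,d_m$ from $(g)_1,\ldots,(g)_m$, and set $d(z)=1+\sum_{i=1}^m d_iz^i\in\Po_m$. By construction the power series $z^m/d(z)$ has the same first $m$ normalized coefficients as $g$, and its remaining coefficients equal $\ell_{m,k}((g)_1,\ldots,(g)_m)$, which are precisely the $(g)_k$; hence $g$ and $z^m/d$ agree as power series, and therefore as holomorphic functions near the origin, giving the identity $g(z)d(z)=z^m$ throughout $\D$ by analytic continuation.

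The one step that needs genuine care — and which I regard as the main obstacle — is concluding that this $d$ actually places $g$ in $\B_m$, i.e.\ that $z^m/d$ is holomorphic in all of $\D$. This follows from the identity $g(z)d(z)=z^m$: if $d$ vanished at some $z_0\in\D$, then $z_0^m=g(z_0)d(z_0)=0$ would force $z_0=0$, contradicting $d(0)=1$; thus $d\neq0$ in $\D$ and $g=z^m/d\in\B_m$. The remaining manipulations (the triangular inversion and the inductive definition of $\ell_{m,k}$) are routine bookkeeping.
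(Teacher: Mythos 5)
Your proposal is correct and follows essentially the same route as the paper: write $g=z^m/d$, compare coefficients in $d\cdot(1/d)\equiv 1$ to get a triangular system expressing $d_1,\ldots,d_m$ polynomially in $(g)_1,\ldots,(g)_m$ and a linear recurrence defining $\ell_{m,k}$ for $k>m$, then reverse the construction for sufficiency. Your extra remark verifying that $d$ is zero-free in $\D$ (via $g(z)d(z)=z^m$ and $d(0)=1$) is a small point of rigor the paper leaves implicit, not a different argument.
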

\begin{proof}
Let $g\in\B_m$. That is, $g(z)=z^m/d(z)$, $d(z)=1+d_1z+\cdots+d_mz^m$. Then
\[
1 \equiv (1+d_1z+\cdots+d_mz^m)\left(1+(g)_1z+\cdots+(g)_mz^m+\cdots\right).
\]
Hence,
\begin{equation}
\label{eq:rel1}
\left\{
\begin{array}{llll}
d_1 &=& -(g)_1, & {} \\
d_k &=& -(g)_k-(g)_{k-1}d_1-\cdots-(g)_1d_{k-1}, & k\in\{2,\ldots,m\},
\end{array}
\right.
\end{equation}
and
\begin{equation}
\label{eq:rel2}
(g)_n = -(g)_{n-1}d_1-\cdots-(g)_{n-m}d_m, \quad n\in\N, \quad n>m.
\end{equation}
Clearly, relations \eqref{eq:rel1} allow us to express $d_k$
polynomially through $(g)_1,\ldots,(g)_k$ for each
$k\in\{1,\ldots,m\}$. Polynomials $\ell_{m,n}$ are constructed then
using \eqref{eq:rel2} inductively in $n$ by plugging in the
corresponding expressions for $d_k$.

To prove the ``if'' part, observe that relations \eqref{eq:rel1}
uniquely determine the set of coefficients $d_1,\ldots,d_m$ for a
given set $\{(g)_1,\ldots,(g)_m\}$. Taking into account the way
polynomials $\ell_{n,m}$ were constructed, we see that relations
\eqref{eq:rel2} take place with these $d_1,\ldots,d_m$. That is,
$g(z)/z^m=1/d(z)$ for some polynomial $d\in\Po_m$.
\end{proof}

\begin{lemma}
\label{lem:d}
Let $g\in\U_m$. Then $|(g^a)_k|=|(g)_k|$, $k\in\{1,\ldots,m\}$, if and only if $g\in\B_m$.
\end{lemma}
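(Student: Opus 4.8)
The plan is to treat $g^a$ as a function of the parameter $a\in D_m$ and to exploit that the transformation \eqref{eq:transf} fixes exactly the functions in $\B_m$. Throughout I read the condition as holding for every $a\in D_m$; for the easy direction this will be automatic.

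For the ``if'' part, let $g\in\B_m$, say $g(z)=z^m/d(z)$ with $d(z)=1+d_1z+\cdots+d_mz^m$. The solutions of $g(z)=a$ are the zeros of $z^m-a\,d(z)$, a polynomial of degree $m$ with leading coefficient $1-ad_m$, so $\prod_i(z-z_i)=(z^m-a\,d(z))/(1-ad_m)$. Substituting into \eqref{eq:transf} and using $g(z)-a=(z^m-a\,d(z))/d(z)$, the factor $z^m-a\,d(z)$ cancels and one finds $g^a(z)=c(1-ad_m)\,z^m/d(z)$; the normalization $z^{-m}g^a(z)\to1$ forces $c(1-ad_m)=1$, whence $g^a=g$. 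In particular $(g^a)_k=(g)_k$ for all $k$, so the moduli agree.

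For the ``only if'' part the first step is analytic, and this is the main obstacle. I would show that for each fixed $z\in\D$ the value $g^a(z)$ depends holomorphically on $a\in D_m$, hence so does each Taylor coefficient $h_k(a):=(g^a)_k$. The point is that although the individual roots $z_i$ are multivalued, the elementary symmetric functions of the solutions of $g=a$ are single-valued and holomorphic on $D_m$, and the normalizing constant $c=-\prod_i(-z_i)/a$ extends holomorphically across $a=0$ (the product vanishes there to first order). Since $z_i\to0$ and $\prod_i(z-z_i)\to z^m$ as $a\to0$, one has $g^a\to g$, i.e. $h_k(0)=(g)_k$. Once this holomorphic dependence is in place, the hypothesis $|h_k(a)|=|(g)_k|$ for all $a\in D_m$ forces, by the open mapping principle, each $h_k$ to be the constant $h_k(0)=(g)_k$; thus $(g^a)_k=(g)_k$ for $k=1,\dots,m$ and every $a\in D_m$.

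To conclude, write $E_a:=z^{-m}g^a$ and let $R_a(z):=\prod_i(1-z/z_i)$ be the normalized polynomial ($R_a(0)=1$) vanishing at the solutions $z_i$ of $g=a$. From \eqref{eq:transf} and the choice of $c$ one obtains the identity $R_a(z)E_a(z)=1-g(z)/a$. Matching the coefficients of $z^1,\dots,z^m$ and using $(E_a)_k=(g^a)_k=(g)_k$ for $k\le m$, the same recursion \eqref{eq:rel1} appearing in Lemma~\ref{lem:ell} yields $R_a(z)=d(z)-z^m/a$, where $d(z)=1+d_1z+\cdots+d_mz^m$ with $d_k=d_k((g)_1,\dots,(g)_m)$ given by \eqref{eq:rel1}. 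Finally, since the roots of $R_a$ are precisely the preimages $z_i$ of $a$, the equation $R_a(z_i)=0$ solved for $a$ reads $a=z_i^m/d(z_i)$; as $g(z_i)=a$, this gives $g(z)=z^m/d(z)$ at every point of $g^{-1}(D_m)$. Because $g^{-1}(D_m)$ is an open neighborhood of the origin, the identity $g=z^m/d$ holds throughout $\D$, so $g\in\B_m$.
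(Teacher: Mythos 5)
Your proof is correct and follows essentially the same route as the paper: establish that the coefficients $(g^a)_k$ depend holomorphically on $a\in D_m$, use constant modulus to conclude they are constant in $a$, and then exploit the $-1/a$ term in the $m$-th coefficient to recover $g=z^m/d$. The only cosmetic difference is in the endgame, where you evaluate the normalized root polynomial $R_a(z)=d(z)-z^m/a$ at its own roots rather than running the paper's symmetric-function recursion \eqref{eq:us5}--\eqref{eq:us6}; both amount to the same computation.
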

\begin{proof}
  It is a trivial computation to verify that $g^a=g$ for any
  $g\in\B_m$. Thus, we only need to prove the ``only if'' part.

  Let $g\in\U_m$ and suppose that $|(g^a)_k|=|(g)_k|$,
  $k\in\{1,\ldots,m\}$. Transformation \eqref{eq:transf} can be
  equivalently written as
\begin{equation}
\label{eq:us0}
g^a(z) = \frac{z^m}{d(z)}\left(1-\frac{g(z)}{a}\right), \quad d(z)=\left(1-\frac{z}{\zeta}\right)\left(1-\frac{z}{\zeta_2}\right)\cdots\left(1-\frac{z}{\zeta_m}\right),
\end{equation}
where $g(\zeta)=g(\zeta_j)=a$, $j\in\{2,\ldots,m\}$. Clearly,
$\zeta=\zeta(a)$ and $\zeta_j=\zeta_j(a)$, $j\in\{2,\ldots,m\}$, are,
in fact, holomorphic functions of $a\in D_m^*:=D_m\setminus[0,1/4^m]$
(or any other domain obtained from $D_m$ by removing a Jordan arc that
connects the origin and some point on the boundary of this
disk). Write
\[
\frac{1}{d(z)} = 1 + \left(\frac1d\right)_1z+\cdots+\left(\frac1d\right)_mz^m + \cdots.
\]
As in \eqref{eq:rel1}, it holds that
\begin{equation}
\label{eq:us1}
\left(\frac1d\right)_k = -s_k\left(-\frac1\zeta,-\frac{1}{\zeta_2},\ldots,-\frac{1}{\zeta_m}\right)-\sum_{j=1}^{k-1}\left(\frac1d\right)_{k-j}s_j\left(-\frac1\zeta,-\frac{1}{\zeta_2},\ldots,-\frac{1}{\zeta_m}\right),
\end{equation}
where $s_k(b_1,\ldots,b_N)$ is the $k$-th symmetric function of $b_1,\ldots,b_N$, i.e.,
\begin{equation}
\label{eq:symfun}
\left\{
\begin{array}{lll}
s_N(b_1,\ldots,b_N) &=& \prod_{j=1}^N b_j \smallskip \\
s_{N-1}(b_1,\ldots,b_N) &=& \sum_{i=1}^N\prod_{j\neq i} b_j \smallskip \\
{} &\cdots& {} \smallskip \\
s_1(b_1,\ldots,b_N) &=& \sum_{j=1}^Nb_j.
\end{array}
\right.
\end{equation}
It is a simple computation to check using \eqref{eq:us0} that
\begin{equation}
\label{eq:us2}
(g^a)_j = \left(\frac1d\right)_j, \quad j\in\{1,\ldots,m-1\}, \quad \mbox{and}  \quad (g^a)_m = \left(\frac1d\right)_m-\frac1a.
\end{equation}
In particular, this means that each $(g^a)_k$, $k\in\{1,\ldots,m\}$,
is a holomorphic function of $a$ in $D_m^*$. Thus, $(g^a)_k\equiv c_k$
for some constants $c_k$ independently of $a$ since by the conditions
of the lemma functions $(g^a)_k$ have constant modulus. Hence,
\eqref{eq:us2} and \eqref{eq:us1} yield that there exist constants
$c_k^*$, $k\in\{1,\ldots,m\}$, such that
\begin{equation}
\label{eq:us3}
s_k\left(-\frac1\zeta,-\frac{1}{\zeta_2},\ldots,-\frac{1}{\zeta_m}\right) \equiv c_k^*, \quad j\in\{1,\ldots,m-1\},
\end{equation}
and
\begin{equation}
\label{eq:us4}
s_m\left(-\frac1\zeta,-\frac{1}{\zeta_2},\ldots,-\frac{1}{\zeta_m}\right) \equiv c_m^* -\frac1a
\end{equation}
for all $a\in D_m$. It is easy to verify that
\begin{equation}
\label{eq:us5}
\left\{
\begin{array}{ll}
s_1\left(-\frac1\zeta,-\frac{1}{\zeta_2},\ldots,-\frac{1}{\zeta_m}\right) & = -\frac1\zeta +s_1\left(-\frac{1}{\zeta_2},\ldots,-\frac{1}{\zeta_m}\right) \smallskip \\
s_k\left(-\frac1\zeta,-\frac{1}{\zeta_2},\ldots,-\frac{1}{\zeta_m}\right) & = -\frac1\zeta s_{k-1}\left(-\frac{1}{\zeta_2},\ldots,-\frac{1}{\zeta_m}\right)+s_k\left(-\frac{1}{\zeta_2},\ldots,-\frac{1}{\zeta_m}\right) \smallskip \\
s_m\left(-\frac1\zeta,-\frac{1}{\zeta_2},\ldots,-\frac{1}{\zeta_m}\right) & = -\frac1\zeta s_{m-1}\left(-\frac{1}{\zeta_2},\ldots,-\frac{1}{\zeta_m}\right)
\end{array}\right.
\end{equation}
for $k\in\{2,\ldots\,m-1\}$. Combining \eqref{eq:us5} with \eqref{eq:us3}, we derive that
\begin{equation}
\label{eq:us6}
s_m\left(-\frac1\zeta,-\frac{1}{\zeta_2},\ldots,-\frac{1}{\zeta_m}\right) = -\left(\frac{1}{\zeta^m}+\frac{c_1^*}{\zeta^{m-1}}+\cdots+\frac{c_{m-1}^*}{\zeta}\right).
\end{equation}
Finally, since $a=g(\zeta)$, we get from \eqref{eq:us4} and \eqref{eq:us6} that
\[
g(\zeta) = \frac{\zeta^m}{c_m^*\zeta^m+\cdots+c_1^*\zeta+1}.
\]
That is, $g\in\B_m$.
\end{proof}

\begin{lemma}
\label{lem:subh}
Let $\ell$ be a polynomial of $N$ variables and $g\in\U_m$. Then $|\ell((g^a)_{k_1},\ldots,(g^a)_{k_N})|$ is a subharmonic function of $a\in D_m$.
\end{lemma}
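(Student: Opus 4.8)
The plan is to reduce the lemma to a single analytic fact: for each fixed index $k$, the coefficient $(g^a)_k$ is a single-valued holomorphic function of $a$ on the whole disk $D_m$. Granting this, the quantity $\ell((g^a)_{k_1},\ldots,(g^a)_{k_N})$ is a polynomial in finitely many functions holomorphic in $a$, hence is itself holomorphic on $D_m$. Since the modulus of a holomorphic function $F$ is subharmonic---$|F|$ is continuous and the mean value property of $F$ together with the triangle inequality give $|F(a_0)|\leq\frac{1}{2\pi}\int_0^{2\pi}|F(a_0+re^{i\theta})|d\theta$ for small $r$---the desired subharmonicity of $|\ell((g^a)_{k_1},\ldots,(g^a)_{k_N})|$ follows at once. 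Thus the entire content of the lemma lies in the holomorphic dependence of the coefficients $(g^a)_k$ on $a$.

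To establish this I would begin from the representation used in the proof of Lemma~\ref{lem:d}, namely \eqref{eq:us0}, $g^a(z)=\frac{z^m}{d(z)}(1-g(z)/a)$ with $d(z)=\prod_{j}(1-z/\zeta_j)$, where $\zeta_1,\ldots,\zeta_m$ are the $m$ solutions of $g(z)=a$ in $\D$. Expanding in powers of $z$ expresses every coefficient $(g^a)_k$---now for arbitrary $k$, not only $k\leq m$---as a polynomial in $1/a$, in the fixed numbers $(g)_i$, and in the coefficients $(1/d)_j$, each of the latter being a symmetric function of $1/\zeta_1,\ldots,1/\zeta_m$ as in \eqref{eq:us1}. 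Because the individual roots $\zeta_j(a)$ are holomorphic on the slit disk $D_m^*=D_m\setminus[0,1/4^m]$, and $a\neq0$ there, each $(g^a)_k$ is holomorphic on $D_m^*$.

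The crux, and the step I expect to be the main obstacle, is to remove the slit. The decisive observation is that $(g^a)_k$ depends on $a$ only through symmetric functions of the roots $\{\zeta_1,\ldots,\zeta_m\}$, equivalently through the monic polynomial $\prod_j(z-\zeta_j)$ whose coefficients are single-valued in $a$. Consequently, although the individual roots are permuted as $a$ crosses the slit $(0,1/4^m)$, the symmetric quantity $(g^a)_k$ is unchanged and extends continuously across it; being holomorphic off the analytic arc and continuous across it, $(g^a)_k$ is then holomorphic on $D_m\setminus\{0\}$. Finally, at $a=0$ the preimages coalesce at the origin, where $d(z)\to1$ and $z^{-m}g^a(z)\to1$, so that $g^a\to g$ and the coefficients $(g^a)_k$ remain bounded; Riemann's removable singularity theorem then extends them holomorphically to all of $D_m$. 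The two delicate verifications are precisely these: that the monodromy of the roots across the slit genuinely preserves each $(g^a)_k$ (making the symmetric dependence explicit through the functions $s_j(-1/\zeta_1,\ldots,-1/\zeta_m)$), and that $(g^a)_k$ stays bounded as $a\to0$ so that the center is removable rather than a branch point. The remaining passage to subharmonicity is routine.
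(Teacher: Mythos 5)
Your proposal is correct and follows essentially the same route as the paper: both reduce the lemma to showing that each coefficient $(g^a)_k$ is a holomorphic function of $a$, and both obtain this from the holomorphy of the roots $\zeta_j(a)$ on the slit disk together with the single-valuedness of their symmetric functions across the slit $[0,1/4^m]$. The only differences are cosmetic: the paper works with $d(z,a)=\prod_j(z-z_j(a))$, whose coefficients are holomorphic on all of $D_m$ at once, and extracts $(g^a)_k$ by a Cauchy-type integral over $|\tau|=\rho$, thereby sidestepping your separate removable-singularity step at $a=0$ --- where, incidentally, your parenthetical ``$d(z)\to1$'' is not accurate for the normalization in \eqref{eq:us0} (there $d(z)=\prod_j(1-z/\zeta_j)$ blows up as the $\zeta_j\to0$), although the boundedness of $g^a$ on a fixed circle as $a\to0$ that your argument actually requires does hold because $\prod_j\zeta_j$ is comparable to $a$.
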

\begin{proof}
  As the modulus of a holomorphic function is subharmonic, we only
  need to show that $(g^a)_k$ is a holomorphic function of $a$ for any
  $k\in\N$.

  Let $a\in D_m$ and $z_1,\ldots,z_m$ be such that $g(z_j)=a$. Then
  each $z_j(a)$ is a holomorphic function of $a\in
  D_m^*:=D_m\setminus[0,1/4^m]$ and for each $j_1$ there exists
  $j_2(\neq j_1)$ such that $z_{j_1}^+\equiv z_{j_2}^-$ on
  $[0,1/4^m]$, where $z_j^+$ and $z_j^-$ are the traces of $z_j$ from
  above and below on $[0,1/4^m]$. This, in particular, means that
  $s_j(z_1(a),\ldots,z_m(a))$ is a holomorphic function of $a\in D_m$
  by the principle of analytic continuation for each
  $j\in\{1,\ldots,m\}$, where $s_j$ is the $j$-th symmetric function
  \eqref{eq:symfun}. Thus,
\[
d(z,a)=(z-z_1(a))\cdots(z-z_m(a))
\]
is a holomorphic function of $a\in D_m$ for each $z$ and it vanishes
as a function of $a$ only if $g(z)=a$. Hence, $g^a(z)$ is a
holomorphic function of $a\in D_m$ for each $z$. Since,
\[
(g^a)_k = \frac{1}{\rho^{2(k+m)}}\int_{|\tau|=\rho} \bar\tau^{k+m}g^a(\tau)\frac{d\tau}{2\pi\rho},
\]
the conclusion of the lemma follows.
\end{proof}

In the proof of Lemma~\ref{lem:d} we pointed out that $\B_m$ is
invariant under \eqref{eq:transf}. In fact, there are no larger subset
of $\U_m$ with this property.

\begin{lemma}
\label{lem:bm}
$\B_m$ is the largest subset of $\U_m$ invariant under \eqref{eq:transf}.
\end{lemma}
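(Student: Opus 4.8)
The plan is to reduce the statement, via Lemma~\ref{lem:d}, to a constancy property of finitely many coefficient functionals, and then to force that constancy by a maximum-principle argument built on the subharmonicity supplied by Lemma~\ref{lem:subh}. Since Lemma~\ref{lem:d} already records that $g^a=g$ for $g\in\B_m$, the class $\B_m$ is invariant under \eqref{eq:transf}; thus it remains to show that any subset $S\subseteq\U_m$ with $g^a\in S$ for every $g\in S$ and every $a\in D_m$ must satisfy $S\subseteq\B_m$. Recall from the proof of Lemma~\ref{lem:subh} that for each $k$ the coefficient $(g^a)_k$ is a holomorphic function of $a\in D_m$, and that evaluating \eqref{eq:transf} at $a=0$ gives $g^0=g$. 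Since a holomorphic function of constant modulus is constant, Lemma~\ref{lem:d} can be restated as: $g\in\B_m$ if and only if the $m$ maps $a\mapsto(g^a)_k$, $k\in\{1,\dots,m\}$, are constant on $D_m$. So the whole task is to prove that for every $g\in S$ these $m$ functions are constant.

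To capture all $m$ coefficients at once I would work with $F(g):=\sum_{k=1}^m|(g)_k|^2$ and the function $a\mapsto F(g^a)$. By Lemma~\ref{lem:subh} each $|(g^a)_k|^2$ is subharmonic, hence $F(g^a)$ is subharmonic in $a\in D_m$; moreover the pointwise identity $\Delta_a F(g^a)=4\sum_{k=1}^m|\partial_a(g^a)_k|^2$ shows that $F(g^a)$ is constant exactly when each $(g^a)_k$ is, that is, exactly when $g\in\B_m$. The extremal step then runs as follows. Normalized $m$-valent functions have uniformly bounded Taylor coefficients and form a normal family, so $\overline{S}$ is compact in the topology of locally uniform convergence and, since $g\mapsto g^a$ is continuous, still invariant; consequently $F$ attains its supremum over $\overline{S}$ at some $g_*$. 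Because $g_*^a\in\overline{S}$ for all $a$, we have $F(g_*^a)\le F(g_*)=F(g_*^0)$, so the subharmonic function $a\mapsto F(g_*^a)$ attains its maximum at the interior point $a=0$. By the maximum principle it is constant, and the Laplacian identity then forces $g_*\in\B_m$.

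The main obstacle is the passage from this extremal conclusion to the full inclusion $S\subseteq\B_m$: the argument above only locates the $F$-maximizer in $\B_m$, whereas a generic $g\in S$ need not maximize $F$. To close this gap I would exploit invariance dynamically. If $g\in S\setminus\B_m$, then $a\mapsto F(g^a)$ is a non-constant subharmonic function taking the value $F(g)$ at the interior point $a=0$, so there is $a\in D_m$ with $F(g^a)>F(g)$; iterating produces a sequence in $S$ along which $F$ strictly increases and therefore converges to the extremal value, which by the previous paragraph is realized only on the fixed locus $\B_m$. The delicate point, and the crux of the whole lemma, is to rule out that the forward orbit of a function outside $\B_m$ can remain inside $\U_m$ while accumulating on $\B_m$; equivalently, that no genuine orbit converges to the pointwise-fixed class $\B_m$ from outside it. I expect to settle this by analysing the transformation \eqref{eq:transf} near $\B_m$, showing that the fixed locus is not approachable by orbits of functions in $\U_m\setminus\B_m$, so that the only way for all iterates of $g$ to stay $m$-valent is to have $g\in\B_m$ from the start.
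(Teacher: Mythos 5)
There is a genuine gap, and you have put your finger on it yourself: your argument only places the maximizer of $F(g)=\sum_{k=1}^m|(g)_k|^2$ inside $\B_m$, and the passage from that extremal statement to $S\subseteq\B_m$ is left as a hope. The dynamical claim you would need --- that no forward orbit of a $g\in\U_m\setminus\B_m$ under \eqref{eq:transf} can stay in $\U_m$ while accumulating on $\B_m$ --- is essentially a restatement of the lemma itself, since the largest invariant set consists exactly of those $g$ all of whose iterates remain in $\U_m$; so the reduction is circular. (Also, your intermediate claim that a strictly increasing sequence of $F$-values ``therefore converges to the extremal value'' is unjustified: a bounded increasing sequence converges, but not necessarily to the supremum of $F$ over $S$.) The root of the problem is the choice of functional: $F$ neither vanishes on $\B_m$ nor detects membership in $\B_m$, so knowing that its maximum over the invariant set is attained at a point of $\B_m$ says nothing about the other elements.

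The paper runs the same compactness-plus-subharmonicity engine but with a functional tailored to collapse: $\phi_n(g):=|(g)_n-\ell_{m,n}((g)_1,\ldots,(g)_m)|$, which by Lemma~\ref{lem:ell} vanishes identically on $\B_m$ and, taken over all $n>m$, vanishes only on $\B_m$. Maximizing $\phi_n$ over the largest invariant set $\B$ yields, via Lemma~\ref{lem:subh} and the maximum principle applied to $a\mapsto\phi_n(g^a)$ with $g^0=g$, a compact non-empty invariant set $\B^n$ of maximizers; successively maximizing $|(g)_m|,|(g)_{m-1}|,\ldots,|(g)_1|$ over it preserves these properties and produces a non-empty invariant set on which $|(g^a)_k|=|(g)_k|$ for $k\in\{1,\ldots,m\}$, which Lemma~\ref{lem:d} places inside $\B_m$. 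Hence some maximizer of $\phi_n$ lies in $\B_m$, where $\phi_n=0$, so $\max_\B\phi_n=0$ and $\phi_n\equiv0$ on all of $\B$; as $n$ is arbitrary, Lemma~\ref{lem:ell} gives $\B=\B_m$ with no dynamical analysis at all. Your Laplacian observation (that constancy of $F(g^a)$ forces constancy of each $(g^a)_k$) is a nice shortcut that could replace the paper's nested maximization, but only the ``vanishing functional'' device actually closes the argument.
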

\begin{proof}
  Denote by $\B$ the largest subset of $\U_m$ invariant under
  \eqref{eq:transf}. Observe that $\B$ is compact with respect to the
  locally uniform convergence in $\D$. Indeed, it is known
  \cite[Thm. 5.3]{Hayman} that $\U_m$ is a normal family and therefore
  $\B$ is bounded. Let $\{g_\alpha\}\subset\B$ be a convergent
  sequence and $g$ be the limit function. Clearly, $g_\alpha^a$
  converge to $g^a$ locally uniformly in $\D$ as well. Then
  $g,g^a\in\U_m$ by Hurwitz's theorem. By iterating this process, we
  indeed see that $g\in\B$.

Fix $n\in\N$, $n>m$, and denote by $\B^n$ the subset of $\B$ consisting of functions maximizing the functional
\[
\phi_n(g) := |(g)_n-\ell_{m,n}((g)_1,\ldots,(g)_m)|,
\]
where the polynomial $\ell_{m,n}$ was introduced in
Lemma~\ref{lem:ell}. Since $\B$ is compact, $\B^n$ is non-empty and
clearly compact. Let $g\in\B^n$. By Lemma~\ref{lem:subh},
$\phi_n(g^a)$ is a subharmonic function of $a$. As $g^0=g$, the
maximum principle for subharmonic functions yields that $g^a\in\B^n$
for all $a\in D_m$.

Among all the functions $g$ in $\B^n$, chose those that have maximal
modulus of $(g)_m$. Using the subharmonicity of $|(g)_m|$ and
compactness of $\B^n$ as in the previous paragraph, we deduce that
this set is compact, non-empty, and invariant under
\eqref{eq:transf}. Further, among functions $g$ in the latter set,
choose those that maximize $|(g)_{m-1}|$. Once more, it follows that
this new set is compact, non-empty, and invariant under
\eqref{eq:transf}. Repeat this procedure for
$(g)_{m-2},\ldots,(g)_1$. After the last step, we reach a non-empty
set invariant under \eqref{eq:transf} with the property
$|(g)_k|=|(g^a)_k|$ for all $k\in\{1,\ldots,m\}$, $a\in D_m$, and each
$g$ in this set. By Lemma~\ref{lem:d}, this set is contained in
$\B_m$.

We have shown that some of the maximizing functions for the functional
$\phi_n$ belong to $\B_m$. However, $\phi_n\equiv0$ on $\B_m$ by the
very definition of $\ell_{n,m}$ and therefore $\phi_n\equiv0$ on
$\B$. Since this is true for any $n$, Lemma~\ref{lem:ell} yields that
$\B=\B_m$.
\end{proof}

\begin{proof}[Proof of Theorem~\ref{thm:overint}]
Let $g$ be such that $g_n\in\I_{n,m}$ for all $n\in\Z_+$. Without loss of generality we may assume that $m$ is the smallest natural number with this property. Thus, there exists the smallest integer in $\Z_+$, say $n_0$, such that
\[
\z(g_{n_0}+q) = m+n_0 \quad \mbox{for some} \quad q\in\Po_{n_0}.
\]
Set $k:=m+n_0$ and $v$, $\deg(v)=k$, to be the monic polynomial
vanishing at the zeros of $g_{n_0}+q$. Define
\begin{equation}
\label{eq:p2}
y(z) := cz^k\frac{(g_{n_0}+q)(z)}{v(z)},
\end{equation}
where $c$ is a normalizing constant such that $z^{-m}y(z)\to1$ as
$z\to0$. Then $y$ is a holomorphic function in $\D$ and for any
$p\in\Po_n$ it holds that
\begin{eqnarray}
\z(y_n+p) &=& \z\left(\frac{cg_{k+n+n_0}+cz^{k+n}q+pv}{v}\right) = \z\left(g_{k+n+n_0}+z^{k+n}q+\frac1cpv\right) - k \nonumber \\
\label{eq:p3}
{} &\leq&  m+k+n+n_0 - k=k+n
\end{eqnarray}
as $(cz^{k+n}q+pv)\in\Po_{k+n+n_0}$. That is, $y\in\U_k$ and
$y_n\in\I_{n,k}$ for all $n\in\Z_+$. Since \eqref{eq:p2} can be viewed
as transformation \eqref{eq:transf} applied to $g_{n_0}+q$ with $a=0$,
estimates analogous to \eqref{eq:p3} show that $y^a\in\U_k$ and
$y^a_n\in\I_{n,k}$, $n\in\Z_+$, for all $a\in D_k$. Applying
transformation \eqref{eq:transf} to $y^a$, we again get that the newly
obtained function belongs to $\U_k$ and its shifts by $z^n$ belong to
$\I_{n,k}$. Clearly, this process can be continued indefinitely. That
is, $y$ belongs to the subset of $\U_k$ invariant under
\eqref{eq:transf}, i.e, $y\in\B_k$ by Lemma~\ref{lem:bm}. Hence,
$y(z)=z^k/d(z)$, $d\in\Po_k$. Then
\[
g(z) = \frac{1}{z^{n_0}}\left(\frac1c\frac{v(z)}{d(z)}-q(z)\right).
\]
Since $\deg(v)=k$ and $\deg(q)=n_0$, $g$ is a rational function of
type $(\max\{m,l\},l)$ with $l:=\deg(d)\leq m+n_0$. Thus, it remains
to show that $l\leq m$.

Set $\D_\rho:=\{z:~|z|<\rho\}$ for fixed $\rho<1$. Clearly,
$g\in\A(\D_\rho)$ and it holds that
\begin{equation}
\label{eq:p4}
\z_\rho(g_n+p) \leq n+m
\end{equation}
for any $p\in\Po_n$ and $n\in\Z_+$, where $\z_\rho(g_n+p)$ is the
number of zeros $g_n+p$ in $\D_\rho$. Since polynomials are contained
and dense in $\A(\D_{1/\rho})$, we get as in
Proposition~\ref{prop:interpol} that \eqref{eq:p4} is equivalent to
\begin{equation}
\label{eq:p5}
\w_{\T_{1/\rho}}(g(1/\cdot)+h)\geq-m
\end{equation}
for all $h\in\A(\D_{1/\rho})$ such that $g(1/\cdot)+h\neq0$ on
$\T_{1/\rho}:=\{z:~|z|=1/\rho\}$. It is easy to see that
$g(1/\cdot)=s/r$, where $s\in\Po_l$ and
$r\in\Po_l\setminus\Po_{l-1}$. As $r(z)=z^ld(1/z)$, all the zeros of
$r$, say $\{w_1,\ldots,w_l\}$, belong to
$\overline\D\subset\D_{1/\rho}$. Fix a determination of each $\log
s(w_j)$, $j\in\{1,\ldots,l\}$, and let $u$ be a polynomial
interpolating the values $\log s(w_j)$ at the points $w_j$,
respectively. Set $h:=(e^u-s)/r$. Then $h\in\A(\D_{1/\rho})$ and
\[
\w_{\T_{1/\rho}}(f+h) = \w_{\T_{1/\rho}}\left(\frac sr+\frac{e^u-s}{r}\right) = \w_{\T_{1/\rho}}\left(\frac{e^u}{r}\right) = -l
\]
since $e^u\neq0$ in $\overline\D_{1/\rho}$. Thus, $l\leq m$ by \eqref{eq:p5}.
\end{proof}

\end{document}